\documentclass[11pt, english]{article}


\usepackage[margin= 2 cm]{geometry}

\usepackage{amsthm}
\usepackage{amsmath}
\usepackage{amssymb}
\usepackage{setspace}
\usepackage{mathtools}
\usepackage{verbatim}
\usepackage{csquotes}
\usepackage{graphicx}
\usepackage[hidelinks]{hyperref}
\usepackage{thm-restate}
\usepackage{cleveref}
\usepackage{graphicx}
\usepackage{appendix}
\usepackage[inline]{enumitem}
\usepackage{framed}
\usepackage{subcaption}

\usepackage{bbm}
\usepackage{mathrsfs}

\usepackage{caption}

\usepackage{floatrow}
\usepackage[T1]{fontenc}

\usepackage{tikz}
\usepackage{mathdots}
\usepackage{xcolor}
\usepackage{diagbox}
\usepackage{colortbl}
\usepackage[absolute,overlay]{textpos}

\graphicspath{ {./images/} }

\usetikzlibrary{calc}
\usetikzlibrary{decorations.pathreplacing}
\usetikzlibrary{positioning,patterns}
\usetikzlibrary{arrows,shapes,positioning}
\usetikzlibrary{decorations.markings}

\tikzstyle{edge}=[very thick]
\definecolor{bostonuniversityred}{rgb}{0.8, 0.0, 0.0}
\definecolor{arsenic}{rgb}{0.23, 0.27, 0.29}
\tikzstyle{diredge}=[postaction={decorate,decoration={markings,
        mark=at position .95 with {\arrow[scale = 1]{stealth};}}}]

\newcommand{\defPt}[3]{
    \def \pt {(#1, #2)}
    \coordinate [at = \pt, name = #3];
}

\tikzset{
   conn/.pic={
     \defPt{0.2}{-0.5}{q0}
     \defPt{-1}{-1.5}{q5}
    \defPt{1}{1.2}{q1}
    \defPt{1}{2.7}{q6}
    \defPt{1.25}{-1.2}{q2}
    \defPt{2.5}{0.6}{q3}
    \defPt{2.5}{-0.6}{q4}
  
        \draw[line width=1 pt] (q0) -- (q1) -- (q3) -- (q4);
        \draw[line width=1 pt] (q2) -- (q3);
        \draw[line width=1 pt] (q0) -- (q5);
        \draw[line width=1 pt] (q1) -- (q6);
  }
}

\newcommand{\fitellipsis}[3] 
{\draw []let \p1=(#1), \p2=(#2), \n1={atan2(\y2-\y1,\x2-\x1)}, \n2={veclen(\y2-\y1,\x2-\x1)}
    in ($ (\p1)!0.5!(\p2) $) ellipse [ x radius=\n2/2+0.3cm+#3cm, y radius=#3cm, rotate=\n1];
}

\newcommand{\fitellipsiss}[3] 
{\draw [fill=white]let \p1=(#1), \p2=(#2), \n1={atan2(\y2-\y1,\x2-\x1)}, \n2={veclen(\y2-\y1,\x2-\x1)}
    in ($ (\p1)!0.5!(\p2) $) ellipse [ x radius=\n2/2+#3cm, y radius=#3cm, rotate=\n1];
}

\newcommand{\fitellipsisss}[3] 
{\draw []let \p1=(#1), \p2=(#2), \n1={atan2(\y2-\y1,\x2-\x1)}, \n2={veclen(\y2-\y1,\x2-\x1)}
    in ($ (\p1)!0.5!(\p2) $) ellipse [ x radius=\n2/2+#3cm, y radius=#3cm, rotate=\n1];
}

\setlength{\parskip}{\medskipamount}
\setlength{\parindent}{0pt}

\addtolength{\intextsep}{0pt} 
\addtolength{\abovecaptionskip}{2pt}
\addtolength{\belowcaptionskip}{-2pt}
\captionsetup{width=0.8\textwidth, labelfont=bf, parskip=5pt}

\setstretch{1.1}

\theoremstyle{plain}

\newtheorem*{thm*}{Theorem}
\newtheorem{thm}{Theorem}
\Crefname{thm}{Theorem}{Theorems}

\newtheorem*{lem*}{Lemma}
\newtheorem{lem}[thm]{Lemma}
\Crefname{lem}{Lemma}{Lemmas}

\newtheorem*{claim*}{Claim}

\Crefname{claim}{Claim}{Claims}
\Crefname{claim}{Claim}{Claims}

\newtheorem{prop}[thm]{Proposition}
\Crefname{prop}{Proposition}{Propositions}

\Crefname{cor}{Corollary}{Corollaries}

\Crefname{conj}{Conjecture}{Conjectures}

\Crefname{qn}{Question}{Questions}
\newtheorem*{qn*}{Question}

\Crefname{obs}{Observation}{Observations}

\Crefname{ex}{Example}{Examples}

\theoremstyle{definition}

\Crefname{prob}{Problem}{Problems}

\newtheorem{defn}[thm]{Definition}
\Crefname{defn}{Definition}{Definitions}

\newtheorem*{defn*}{Definition}

\theoremstyle{remark}

\renewenvironment{proof}[1][]{\begin{trivlist}
\item[\hspace{\labelsep}{\bf\noindent Proof#1.\/}] }{\qed\end{trivlist}}

\newcommand{\floor}[1]{
    \left\lfloor #1 \right\rfloor
}

\expandafter\def\expandafter\normalsize\expandafter{%
    \normalsize
    \setlength\abovedisplayskip{8pt}
    \setlength\belowdisplayskip{8pt}
    \setlength\abovedisplayshortskip{4pt}
    \setlength\belowdisplayshortskip{4pt}
}

\usepackage[square,sort,comma,numbers]{natbib}
\setlength{\bibsep}{1 pt plus 20 ex}

 \setlist[itemize]{leftmargin=*}

\DeclareFontFamily{OT1}{pzc}{}
\DeclareFontShape{OT1}{pzc}{m}{it}{<-> s * [1.10] pzcmi7t}{}
\DeclareMathAlphabet{\mathpzc}{OT1}{pzc}{m}{it}

\title{\vspace{-0.8cm} 
Intersecting hypergraphs with large cover number}

\author{
Matija Buci\'c\thanks{Department of Mathematics, Princeton University -- Princeton, USA. Email: \href{mailto:mb5225@princeton.edu} {\nolinkurl{mb5225@princeton.edu}}. Research supported in part by an NSF Grant DMS--2349013.}
\and
Vanshika Jain\thanks{Department of Mathematics, Princeton University -- Princeton, USA. Email: \href{mailto:vanshika@princeton.edu} {\nolinkurl{vanshika@princeton.edu}}. Research supported in part by an NSF Graduate Research Fellowship DGE–2039656.}
\and 
Varun Sivashankar\thanks{Department of Mathematics, Princeton University -- Princeton, USA. Email: \href{mailto:varunsiva@princeton.edu}{\nolinkurl{varunsiva@princeton.edu}}}
}
 \date{}

\begin{document}

\maketitle

\vspace{-0.5cm}
\begin{abstract}
In their famous 1974 paper introducing the local lemma, Erd\H{o}s and Lov\'asz posed a question—later referred by Erd\H{o}s as one of his three favorite open problems:
What is the minimum number of edges in an \( r \)-uniform, intersecting hypergraph with cover number \( r \)? This question was solved up to a constant factor in Kahn's remarkable 1994 paper. More recently, motivated by applications to Bollob\'as' ``power of many colours'' problem, Alon, Buci\'c, Christoph, and Krivelevich introduced a natural generalization by imposing a space constraint that limits the hypergraph to use only \(n\) vertices.
In this note we settle this question asymptotically, up to a logarithmic factor in $n/r$ in the exponent, for the entire range.
\end{abstract} 

\section{Introduction} 
The following classical problem was posed by Erd\H{o}s and Lov\'asz in their influential 1974 paper \cite{erdos-lovasz-74}, which introduced the local lemma: 
\begin{qn*}
    What is the minimum number of edges in an \( r \)-uniform, intersecting hypergraph with cover number \( r \)?
\end{qn*}
Here, the \emph{cover number} of a hypergraph is the minimum size of a set of vertices which intersects all the edges and a hypergraph is \emph{intersecting} if any two edges have a non-empty intersection.
Following Erd\H{o}s and Lov\'asz we will denote the answer to this question by \( g(r) \).

In an intersecting $r$-uniform hypergraph, every edge is a cover, so the cover number is at most $r$. Therefore, the question asks for the the minimum number of edges required to achieve the maximum cover number while maintaining the intersecting property. In addition, if we denote by $n$ the number of vertices, ensuring that the cover number is at least \(r\) is equivalent to saying that any vertex subset of size $n-r+1$ must contain an edge (this makes it an instance of the classical hypergraph Tur\'an problem, see \cite{keevash-2011-turan-survey, frankl-rodl-lower-bounds-turan, sidorenko-what-we-know-turan, de-caen-status-turan, sudakov-developments-turan,multi-index-hasing} for the rich history of this type of questions as well as its connections to covering designs and error correcting codes). One can view this as a (very weak) pseudorandom property and indeed random hypergraphs provide very good examples provided one drops the intersecting requirement. On the other hand, being intersecting is a (very weak) structural assumption so the question in a certain sense seeks sparse hypergraphs which exhibit some random-like properties as well as have some structure.

Erd\H{o}s and Lov\'asz 
proved in the original paper that  \[ {8r}/{3} - 3\le g(r) \le 4r^{3/2} \log r, \] 
where the upper bound is conditional on existance of a projective plane of order \(r-1\), and \(r\) being sufficiently large. They conjectured that the true answer is linear, so that \(g(r) = O(r)\). This was one of Erd\H{o}s' three favorite combinatorial problems (see \cite{erdos-favorite-problems}) alongside Erd\H{o}s-Faber-Lov\'asz (which has recently been proved in a remarkable paper \cite{erdos-faber-lovasz-proof} by Kang, Kelly, K{\"u}hn, Methuku, and Osthus) and the Sunflower Conjecture (which while still open has seen some remarkable progress recently \cite{sunflower-lemma-alweiss, sunflower-follow-up-rao, tao-2020-sunflower, bell-chueluecha-lutz-sunflower}).
The Erd\H{o}s-Lov\'asz conjecture was ultimately resolved by Kahn, first up to a lower order term in \cite{kahn-92} and then in full in \cite{kahn-94}.

In a recent paper \cite{power-of-many-colours}, Alon, Buci\'c, Christoph, and Krivelevich, motivated by applications to a certain ``power of many colours'' problem (first introduced by Bollob\'as \cite{bollobas-high-connected}), generalized the question of Erd\H{o}s and Lov\'asz and asked what happens if one is space constrained.
\begin{qn*}
    Let $n \ge 2r-1\ge 3$ be integers. What is the minimum number of edges in an $n$-vertex $r$-uniform intersecting hypergraph with cover number equal to $r$?
\end{qn*}
We denote the answer to this question by $f(n,r)$.
To get an initial feeling for the problem we note that if $n\le 2r-2,$ then any $r-1$ vertices make a cover so such hypergraphs do not exist. If $n=2r-1$, then one is forced to take a complete $r$-uniform hypergraph since the complement of any missing edge would be an $r-1$ cover. This implies that $f(2r-1,r)=\binom{2r-1}{r}$. On the other hand, when $n$ is quadratic, Kahn's construction provides a tight linear bound. Note that an $r$-uniform hypergraph with $O(r)$ edges has at most $O(r^2)$ non-isolated vertices so this follows easily from the result in \cite{kahn-94} even without delving deeper into the intricate construction. On the other hand, a more careful examination of the proof shows the construction, still using projective planes as an ingredient, does actually require a quadratic number of vertices.
This leads to the natural question of how the behaviour of $f(n,r)$ transitions from being exponential in $r$ when $n$ is linear to being linear in $r$ when $n$ is quadratic.

An easy lower bound on $f(n,r)$ can be obtained via a standard double counting argument, due to de Caen, which also gives the essentially best known lower bounds for Turán numbers \cite{de-caen}. Here, on one hand for any of the $\binom{n}{r-1}$ vertex subsets of size $r-1$ there must be an edge disjoint from it (or else this subset provides a cover of size less than $r$). On the other hand, an edge can be disjoint from at most $\binom{n-r}{r-1}$ such subsets showing immediately that 
\begin{equation}\label{eq:de-caen}
    f(n, r) \ge \frac{\binom{n}{r-1}}{\binom{n-r}{r-1}} \ge 2^{\Omega(r^2/n)}.
\end{equation} 

In this paper we give an asymptotic answer to the question of Alon, Buci\'c, Christoph, and Krivelevich by showing this lower bound is essentially tight.

\begin{thm}\label{thm:main}
    For any $3\le  2r-1 \le n \le O(r^2)$, 
    $$f(n,r)= 2^{\tilde{\Theta}(r^2/n)}.$$
\end{thm}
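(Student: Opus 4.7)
The lower bound $f(n,r) \ge 2^{\Omega(r^2/n)}$ is supplied directly by the double counting in~\eqref{eq:de-caen}, so the goal is to construct, for every admissible $(n,r)$, an intersecting $r$-uniform hypergraph on $n$ vertices with cover number $r$ and only $2^{(r^2/n)\cdot \mathrm{polylog}(n/r)}$ edges. The plan is to work in a \emph{transversal model}: partition $[n]$ into $r$ blocks $B_1,\ldots,B_r$ of size $m = \lfloor n/r\rfloor$ and restrict attention to edges that pick exactly one vertex from each block. Such an edge corresponds to a vector $v \in [m]^r$; two edges intersect iff the vectors agree on some coordinate, and a vertex cover of size at most $r-1$ corresponds to a tuple $(A_1,\ldots,A_r)$ of subsets of $[m]$ with $\sum_i|A_i| \le r-1$ such that every chosen vector $v$ has $v_i \in A_i$ for some $i$. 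The problem becomes: find $\mathcal F \subseteq [m]^r$ of size $N = 2^{\tilde O(r/m)}$ that is pairwise coordinate-agreeing and such that every such partial cover is avoided by some vector in~$\mathcal F$.

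For the cover-number condition alone, a random construction works: taking $N$ i.i.d.\ uniformly random vectors, the probability that a fixed tuple $(A_i)$ is avoided by no vector is at most $(1-(1-(r-1)/m))^N = e^{-\Omega(N/m)}$, while there are at most $\binom{rm}{r-1} = 2^{O(r\log m)}$ such tuples, so $N \asymp r\log m\cdot e^{r/m} = 2^{\tilde O(r^2/n)}$ suffices by a union bound. The intersecting condition is more delicate: two uniform random vectors in $[m]^r$ are disjoint on every coordinate with probability $(1-1/m)^r \asymp e^{-r^2/n}$, so a uniform sample at scale $N$ contains $\sim N^2 e^{-r^2/n} \gg N$ disjoint pairs whenever $n$ is not close to $2r$. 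The plan is to replace uniform sampling by a biased distribution anchored to an auxiliary ``skeleton'' intersecting family $\mathcal S$: each vector of $\mathcal F$ is obtained by first picking a coordinate/value pair from $\mathcal S$ (thereby guaranteeing an agreement with a member of $\mathcal S$) and then completing the remaining coordinates at random. Choosing $\mathcal S$ to be Kahn's projective-plane construction~\cite{kahn-94} when $n = \Theta(r^2)$, and a suitable combinatorial/random generalization of it when $n$ is smaller, forces every sample to share a coordinate with $\mathcal S$ by construction, and the cover condition is then enforced via a Lov\'asz Local Lemma argument with bad events of the form ``some partial cover $(A_i)$ is avoided by no sampled vector'' and ``some pair of sampled vectors agrees on no coordinate''.

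The main obstacle is handling the intersecting condition uniformly across the full range $2r-1 \le n \le O(r^2)$. At the two endpoints the construction is essentially classical: Kahn's projective planes at $n \approx r^2$, and a small modification of the complete hypergraph at $n \approx 2r-1$ (where $m \le 2$ makes intersection nearly automatic). But interpolating smoothly requires the skeleton $\mathcal S$ to degrade gracefully as $n$ shrinks, while keeping the LLL dependency graph of bounded degree; balancing the double job of enforcing pairwise agreement \emph{and} covering all partial transversals against the limited supply of random bits in each sample is the delicate part of the analysis. The polylogarithmic loss in $n/r$ absorbed by $\tilde\Theta$ reflects precisely the slack needed to make both families of bad events simultaneously satisfy the LLL criterion, and verifying this uniformly in $(n,r)$ is the main technical content.
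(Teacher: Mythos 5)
Your construction has a fatal structural flaw that makes it unworkable across the entire interesting range $2r-1\le n<r^2$. In the transversal model, every edge selects exactly one vertex from each of the $r$ blocks of size $m=\lfloor n/r\rfloor$, so any single block $B_i$ is automatically a vertex cover: $\tau\le m$. Hence the cover number can reach $r$ only if $m\ge r$, i.e.\ $n\ge r^2$, which is precisely the regime where the problem is already easy (Kahn's construction with $O(r)$ edges suffices). In your own language, the tuple $(A_1,\ldots,A_r)=(B_1,\varnothing,\ldots,\varnothing)$ satisfies $\sum_i|A_i|=m\le r-1$ and is avoided by \emph{no} vector in $[m]^r$ whatsoever, so no choice of $\mathcal F$ — random, skeleton-anchored, LLL-tuned, or otherwise — can defeat it. The rest of the plan inherits this problem; in particular the ``cover'' bad events in your LLL setup also depend on all $N$ samples simultaneously, which would require additional decomposition even if the model were sound, but that issue is moot given the block-cover obstruction.

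For comparison, the paper avoids this trap by using a \emph{nested} rather than transversal block structure: the substitution product $H_1\rtimes H_2$ places a copy of $V(H_2)$ at each vertex of $H_1$ and forms edges by choosing an edge $e\in E(H_1)$ and then one $H_2$-edge inside each block indexed by $v\in e$. Because an edge uses only $|e|=r_1$ of the blocks (not all of them), deleting a single block does not cover everything, and in fact Lemma~\ref{lem:multiplicative-tau} shows $\tau(H_1\rtimes H_2)=\tau(H_1)\tau(H_2)$. Taking $H_1$ to be (essentially) the complete $r_1$-uniform hypergraph on $2r_1-1$ vertices and $H_2$ to be Kahn's $O(r_2)$-edge critical hypergraph, and patching up divisibility with the augmentation step of Lemma~\ref{lem:add-one}, yields the claimed $(n/r)^{O(r^2/n)}$ bound deterministically, with no probabilistic argument needed. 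If you want to salvage a transversal-style approach you would need edges that occupy only a sublinear fraction of the blocks so that no single block is a cover — at which point you are effectively rediscovering a product-type construction.
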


More precisely, we show that there exists an \( n \)-vertex, \( r \)-uniform hypergraph with cover number \( r \) and at most \((n/r)^{O(r^2/n)}\) edges, determining the answer up to a term logarithmic in $n/r$ in the exponent.

For our upper bound on \(f(n, r)\), we make use of a hypergraph product construction that preserves both the intersecting property and criticality in terms of cover number and allows us to interpolate between the complete graph construction and the one found by Kahn \cite{kahn-94} which are optimal at the respective ends of the regime. We describe this product and establish some of its properties relevant for us in \Cref{sec:dproduct}. 
One downside of this construction is that the uniformity of the hypergraphs we construct is always composite and it is surprisingly non-trivial to perform local modifications which preserve both of our desired properties (with one being increasing and one decreasing hypergraph property). Our final ingredient is such a local augmentation lemma which precisely suffices to remove any divisibility requirements on the uniformity. In Section \ref{sec:upper-bound}, we put these arguments together to prove \Cref{thm:main}.

\textbf{Notation.}
Given a hypergraph $H$, we denote by $V(H)$ and $E(H)$, its vertex and edge set, respectively. We will denote by $\tau(H)$ the size of a minimum vertex cover of $H$.
All of our logarithms are in base two unless otherwise specified.

\section{Wreath product of hypergraphs and its properties}\label{sec:dproduct}
We begin by defining a peculiar type of hypergraph product which preserves intersecting and cover criticality properties. We note that the use of this product in combinatorics dates back at least to the paper of Erd\H{o}s and Lov\'asz \cite{erdos-lovasz-74} and independently PhD thesis of Frankl \cite{frankl-thesis} but also appears in group theory \cite{group-theory}. A well-known example in combinatorics is the iterated Fano plane construction. See also \cite{wreath-product}, for a recent application concerned with finding odd sunflowers.

Before giving a formal definition let us describe the product. We start with two hypergraphs $H_1$ and $H_2$ and define their \emph{wreath product} $H_1\rtimes H_2$ to have vertex set consisting of $|V(H_1)|$ many vertex disjoint copies of $V(H_2)$, which we will refer to as \emph{blocks} and identify each block with a vertex of $H_1$. Its edges are all constructed as follows. We pick an edge $e$ of $H_1$ and pick for each $v\in e$ an edge $f_v$ of $H_2$, then we make an edge by taking a disjoint union of the edges $f_v$ where the edge $f_v$ is taken from the block corresponding to $v$. We now give a formal definition.

\begin{defn}
    Given two hypergraphs $H_1$ and $H_2$ we define the \emph{wreath product} hypergraph $H_1 \rtimes H_2$ by 
    \begin{align*}    
    V(H_1 \rtimes H_2)&:=\{(v_1,v_2) \mid v_1 \in V(H_1), v_2\in V(H_2)\} \text{ and}\\
        E(H_1 \rtimes H_2)&:=\big\{ \{(v,u)\mid v \in e, u \in f_v \}\mid e \in E(H_1), \forall v \in e, f_v \in E(H_2)\big\}.
         \end{align*}
\end{defn}

We note that the operation is not commutative, that is $H_1 \rtimes H_2$ is not in general equal to $H_2 \rtimes H_1.$ In the following lemmas, we establish several useful properties of the wreath product of uniform\footnote{We note that we will focus on the uniform case although all the lemmas have, slightly more complicated, general versions as well.} hypergraphs. We begin with some immediate properties to help familiarize the reader with the product.

\begin{lem}\label{lem:properties}
    Let $H_1$ be an $r_1$-uniform and $H_2$ an $r_2$ uniform hypergraph. Then,
    \begin{enumerate}
        \item \label{itm:1} $|V(H_1 \rtimes H_2)|=|V(H_1)|\cdot |V(H_2)|.$
        \item \label{itm:2} $|E(H_1 \rtimes H_2)|=|E(H_1)| \cdot |E(H_2)|^{r_1}$
        \item \label{itm:3} $H_1 \rtimes H_2$ is $r_1r_2$-uniform.
    \end{enumerate}
\end{lem}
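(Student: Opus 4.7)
The plan is to verify each of the three properties directly from the definition, as they are essentially a book-keeping exercise once one unpacks what an edge of the product looks like. I do not anticipate any real obstacle here; the only place where even a short argument is needed is an injectivity check in item \ref{itm:2}, so that is where I will be a bit more careful.

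For item \ref{itm:1}, the vertex set is defined as the Cartesian product $V(H_1)\times V(H_2)$, so the cardinality is immediate by multiplication. For item \ref{itm:3}, I would observe that any edge of $H_1\rtimes H_2$ is by construction a disjoint union
\[
\bigcup_{v\in e}\big(\{v\}\times f_v\big),
\]
with $e\in E(H_1)$ of size $r_1$ and each $f_v\in E(H_2)$ of size $r_2$. Since these $r_1$ pieces sit in pairwise disjoint blocks (they have distinct first coordinates), the edge has exactly $r_1\cdot r_2$ vertices, proving $r_1r_2$-uniformity.

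For item \ref{itm:2}, the natural enumeration of edges is: pick $e\in E(H_1)$, contributing a factor of $|E(H_1)|$; then for each of the $r_1$ vertices $v\in e$, independently pick $f_v\in E(H_2)$, contributing a factor $|E(H_2)|^{r_1}$. This matches the claimed count, so what remains is to check that distinct tuples $(e,(f_v)_{v\in e})$ give rise to distinct edges, i.e., there is no overcounting. I would do this by showing the tuple can be recovered from the edge itself: from an edge $E\subseteq V(H_1\rtimes H_2)$ of the above form, $e$ is precisely the image of $E$ under projection onto the first coordinate (each $f_v$ is non-empty since $r_2\geq 1$, hence every $v\in e$ contributes to $E$), and then each $f_v$ is read off as the second-coordinate slice $\{u:(v,u)\in E\}$. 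This uniqueness is the only non-automatic step, and with it the product formula in item \ref{itm:2} follows.
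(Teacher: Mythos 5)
Your proof is correct and follows essentially the same route as the paper: item \ref{itm:1} is definitional, item \ref{itm:3} follows from the disjoint-block union, and item \ref{itm:2} by counting tuples $(e,(f_v)_{v\in e})$. The only difference is that you explicitly verify injectivity of the tuple-to-edge map by recovering $e$ and the $f_v$'s from an edge via projections, a point the paper leaves implicit; that is a reasonable bit of extra care and does not change the argument.
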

\begin{proof}
    \Cref{itm:1} is immediate from the definition. Every edge of $H_1 \rtimes H_2$ is a disjoint union of $r_1$ edges of $H_2$, so has size $r_1r_2$ establishing \Cref{itm:3}. Furthermore, we have $|E(H_1)|$ choices for $e$ and given $e$, $|E(H_2)|$ choices for each $f_v$ for each of the $|e|=r_1$ of $v \in e$. This gives that the number of edges of $H_1 \rtimes H_2$ equals $|E(H_1)| \cdot |E(H_2)|^{r_1}$, establishing \Cref{itm:2}.
\end{proof}

We establish two key properties of the wreath product in the following two lemmas.

\begin{lem}\label{lem:intersecting}
    If $H_1$ and $H_2$ are intersecting hypergraphs, then so is $H_1 \rtimes H_2$.
\end{lem}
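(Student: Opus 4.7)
The plan is to take two arbitrary edges of $H_1 \rtimes H_2$ and exhibit a common vertex by applying the intersecting property first in $H_1$ (to find a common block) and then in $H_2$ (to find a common vertex within that block). Concretely, I would start by writing the two edges out directly from the definition: an arbitrary edge $E$ has the form $\{(v,u) : v \in e,\, u \in f_v\}$ for some $e \in E(H_1)$ and some choice of $f_v \in E(H_2)$ for each $v \in e$, and similarly a second edge $E'$ has the form $\{(v,u) : v \in e',\, u \in f'_v\}$ for some $e' \in E(H_1)$ and some choice of $f'_v \in E(H_2)$.

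Since $H_1$ is intersecting, I can choose a vertex $v^* \in e \cap e'$. This identifies a common block in which both edges are nontrivially supported: the restriction of $E$ to block $v^*$ is exactly $\{v^*\} \times f_{v^*}$, and the restriction of $E'$ to that block is $\{v^*\} \times f'_{v^*}$. Next, using that $H_2$ is intersecting, I pick $u^* \in f_{v^*} \cap f'_{v^*}$. Then $(v^*, u^*) \in E \cap E'$, which is what we want.

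I do not expect any genuine obstacle here: the substitution product is defined exactly so that intersections propagate blockwise, and the proof is just a careful unpacking of the definition with two successive applications of the intersecting hypothesis. No case analysis, counting, or quantitative estimate is needed, and the argument does not use the uniformity of $H_1$ or $H_2$.
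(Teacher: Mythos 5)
Your proof is correct and follows essentially the same two-step argument as the paper: first apply the intersecting property of $H_1$ to find a common block $v^* \in e \cap e'$, then apply the intersecting property of $H_2$ within that block to find $u^* \in f_{v^*} \cap f'_{v^*}$, yielding the common vertex $(v^*, u^*)$. There is no meaningful difference in the approach.
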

\begin{proof}
    Let us take two arbitrary edges $X=\{(v,u)\mid v \in e, u \in f_v \}$ defined by some $e \in E(H_1)$ and $f_v \in E(H_2)$ for all $v \in e$ and $Y=\{(v,u)\mid v \in e', u \in f_v' \}$ defined by some $e' \in E(H_1)$ and $f_v' \in E(H_2)$ for all $v \in e'$. Since $H_1$ is intersecting there is some $v \in e \cap e'$. Since $H_2$ is intersecting there is some $u \in f_v \cap f_v'$. For this $v$ and $u$ we have $(v,u) \in X \cap Y$. Since $X$ and $Y$ were arbitrary this shows $H_1 \rtimes H_2$ is indeed intersecting.
\end{proof}

A key property we will use is that wreath product is multiplicative in terms of the cover number.

\begin{lem}\label{lem:multiplicative-tau}
    Let $H_1$ and $H_2$ be hypergraphs. Then, 
    $$ \tau(H_1 \rtimes H_2) =\tau(H_1)\tau(H_2).$$
\end{lem}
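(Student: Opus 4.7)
The plan is to prove the two inequalities $\tau(H_1 \rtimes H_2) \le \tau(H_1)\tau(H_2)$ and $\tau(H_1 \rtimes H_2) \ge \tau(H_1)\tau(H_2)$ separately.

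For the upper bound, I would simply exhibit a cover of the product of the expected size. Take a minimum cover $C_1$ of $H_1$ and a minimum cover $C_2$ of $H_2$ and let $C := C_1 \times C_2 \subseteq V(H_1 \rtimes H_2)$. Given any edge $X = \{(v,u) : v \in e, u \in f_v\}$ of $H_1 \rtimes H_2$, pick $v \in e \cap C_1$ using that $C_1$ covers $e$, and then $u \in f_v \cap C_2$ using that $C_2$ covers $f_v$. Then $(v,u) \in X \cap C$, so $C$ is a cover of size $\tau(H_1)\tau(H_2)$.

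For the lower bound, the natural approach is to start from an arbitrary cover $C$ of $H_1 \rtimes H_2$ and project it onto blocks. For each $v \in V(H_1)$, define $C_v := \{u \in V(H_2) : (v,u) \in C\}$, so $|C| = \sum_v |C_v|$. The key step is to identify $A := \{v \in V(H_1) : C_v \text{ is a cover of } H_2\}$ and prove that $A$ itself is a cover of $H_1$. The argument is by contradiction: if some edge $e \in E(H_1)$ avoids $A$, then for each $v \in e$ we can pick an edge $f_v$ of $H_2$ disjoint from $C_v$, and the edge of $H_1 \rtimes H_2$ built from $e$ and these $f_v$'s has empty intersection with $C$ by construction, contradicting that $C$ was a cover. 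Given that $A$ covers $H_1$ we have $|A| \ge \tau(H_1)$, and $|C_v| \ge \tau(H_2)$ for each $v \in A$, so $|C| \ge \sum_{v \in A} |C_v| \ge \tau(H_1)\tau(H_2)$, completing the proof.

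The main content is the lower bound; the upper bound is essentially immediate from the definitions. Within the lower bound, the only subtle point is choosing the right set to project onto (those blocks whose projection is already a full cover of $H_2$) and then using the freedom in the definition of the product to pick $f_v$'s independently in each block to produce the contradictory uncovered edge. No delicate calculation is required.
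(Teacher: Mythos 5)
Your proof is correct and follows essentially the same approach as the paper: the upper bound via the product cover $C_1 \times C_2$ is identical, and the lower bound uses the same key idea of projecting a cover onto blocks and constructing an uncovered edge by picking a "missed" edge $f_v$ in each non-covering block. The only cosmetic difference is that the paper phrases the lower bound as a contradiction argument keyed to the count $|C_v| \ge \tau(H_2)$, whereas you argue directly that the set $A$ of blocks whose projection is a genuine cover of $H_2$ must itself cover $H_1$.
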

\begin{proof}
    Let $\tau_i=\tau(H_i)$. Suppose $T$ is a cover of $H_1 \rtimes H_2$ and suppose towards a contradiction that $|T|<\tau_1\tau_2$.
    Let $S \subseteq V(H_1)$ consist of vertices $v$ such that $T$ intersects the block corresponding to $v$ in less than $\tau_2$ vertices. Note that $V(H_1) \setminus S$ consists of less than $\tau_1$ vertices, as otherwise, we would have $|T|\ge \tau_1 \cdot \tau_2$. This implies that $V(H_1) \setminus S$ is not a cover for $H_1$ and hence there exists an edge $e \in E(H_1)$ such that $e \subseteq S$. This in turn gives that for any $v \in e$ there exists an $f_v \in E(H_2)$ such that $(v,u) \notin T$ for any $u \in f_v$. With this choice of $e$ and $f_v$'s for $v \in e$ we get an edge $\{(v,u)\mid v \in e, u \in f_v \}$ which is not covered by $T$. This is a contradiction showing $\tau(H_1 \rtimes H_2) \ge \tau_1 \tau_2$.

    To see the upper bound pick a cover $T_i$ of $H_i$ of size $\tau_i$ for $i=1,2$. Now let $T:=\{(v,u) \mid v \in T_1, u \in T_2\}$. Note that $|T|=\tau_1 \tau_2$ and we claim it is a cover of $H_1 \rtimes H_2$.  Consider an edge $X:=\{(v,u)\mid v \in e, u \in f_v \}$ defined by some $e \in E(H_1)$ and $f_v \in E(H_2)$ for all $v \in e$. Note that since $T_1$ is a cover of $H_1$ there exists $v \in e \cap T_1$. Since $T_2$ is a cover of $H_2$ there exists $u \in T_2 \cap f_v.$ Now $(v,u) \in T \cap X$ showing $T$ covers $X$. Since $X$ was arbitrary this shows $T$ is indeed a cover.
\end{proof}

\section{Intersecting hypergraphs with high cover number} \label{sec:upper-bound}
We say that an $r$-uniform hypergraph is \emph{critical} if it is intersecting and has cover number equal to $r$. This definition is motivated by the observation that any intersecting $r$-uniform hypergraph always has cover number at most $r$.

Next, we state formally Kahn's celebrated result, mentioned in the introduction, which provides one of the two hypegraphs used in our product construction—the other being the complete hypegraph. 

\begin{thm}[Kahn, \cite{kahn-94}]\label{thm:kahn}
    There exists $B>0$ such that for any $r \ge 2$ there exists an $r$-uniform critical hypergraph with at most $Br$ edges.
\end{thm}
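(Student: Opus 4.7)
The plan is to construct the desired critical hypergraph probabilistically, by sparsifying a dense deterministic base. The natural starting point is a projective plane $P$ of order $q = r - 1$ (valid when $r - 1$ is a prime power; general $r$ can be handled by perturbing nearby prime powers). This is an $r$-uniform hypergraph on $q^2 + q + 1 = \Theta(r^2)$ vertices with $\Theta(r^2)$ lines, any two of which meet in a unique point, so $P$ is intersecting and its cover number equals $r$ (attained by any single line). Thus $P$ is itself critical, but with a quadratic rather than linear number of edges.

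To sparsify, I would keep each line of $P$ independently with probability $p = C/r$ for a constant $C$ to be chosen. The intersecting property is preserved automatically, and the expected number of surviving edges is $\Theta(r)$, with Chernoff concentration. The essential task, and the crux, is to show that with positive probability the cover number is still $r$: for every $(r-1)$-set $S$ of vertices, at least one line of $P$ disjoint from $S$ must survive. Writing $d_S$ for the number of lines in $P$ disjoint from $S$, the bad event $B_S$ has probability $(1-p)^{d_S}$. A clean double count, using the fact that any vertex outside $S$ lies on at least $(q+1) - |S| = 1$ line disjoint from $S$, gives $d_S(q+1) \ge |V(P)\setminus S| = q^2 + 1$ and hence $d_S \ge q = r - 1$, with equality for $(r-1)$-subsets of a line.

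The main obstacle is closing the probabilistic accounting. Even with the sharp bound $d_S \ge r - 1$, one only gets $\Pr[B_S] \le e^{-C(r-1)/r}$, while there are $\binom{r^2}{r-1} = r^{\Theta(r)}$ candidate sets $S$, and the naive union bound (or even the Lov\'asz Local Lemma, which degenerates here because each $B_S$ shares missing lines with exponentially many other $B_{S'}$) forces $C = \Omega(r \log r)$, blowing the edge count to $\Theta(r^2 \log r)$ and ruining the construction. So a single independent sparsification of a single projective plane provably cannot reach $O(r)$ edges. To break this barrier one must enrich the random process, for instance by adding a carefully chosen random matching or perturbation on top of $P$ so that the structured ``dangerous'' sets (those $S$ with $d_S$ close to $r - 1$, which a structural analysis shows lie essentially on a single line and are therefore polynomially few) are handled deterministically, while the generic sets $S$ with $d_S = \Theta(r^2)$ are handled by a local-lemma argument on the remaining truly local dependencies. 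Designing a random process that simultaneously preserves the intersecting property, keeps the cover number at $r$, and produces only $O(r)$ edges is the technical heart of Kahn's proof, and is the step I expect to consume essentially all of the effort.
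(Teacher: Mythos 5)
The paper does not prove this statement; it cites it as an external black box from Kahn's 1994 paper, which is one of the landmark results the paper builds on. So there is no internal argument to compare against, and the relevant question is whether your sketch actually constitutes a proof of the theorem on its own terms.

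It does not, and you say so yourself. You correctly identify the natural starting point (a projective plane of order $r-1$, which is indeed $r$-uniform, intersecting, with cover number $r$), and you give a correct and well-quantified explanation of why the obvious idea fails: keeping each line independently with probability $\Theta(1/r)$ cannot kill all the bad events $B_S$ because the worst $(r-1)$-sets $S$ (those contained in a single line) have $d_S = r-1$, and there are far too many candidate $S$ for a union bound or the local lemma to close the argument at edge density $O(r)$. That diagnosis of the obstruction is accurate and is the right thing to notice. However, the remedy you propose --- ``adding a carefully chosen random matching or perturbation on top of $P$'' so that the structured dangerous sets are handled deterministically while generic sets are handled by a local-lemma argument --- is a speculative outline, not a construction. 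You do not specify the random process, do not show it preserves the intersecting property, do not verify that the cover number stays at $r$, and do not bound the edge count. Kahn's actual proof is a genuinely difficult piece of work (it replaces the single projective plane by a more elaborate structure and uses a delicate semi-random argument, and the earlier paper \cite{kahn-92} already needed substantial machinery to get $(1+o(1))$ times linear). So the entire technical content of the theorem is missing from the proposal; what you have is a correct account of why the easy route is closed, plus an honest acknowledgment that the hard part remains to be done.
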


At our precision level, one could use a simpler, albeit weaker, construction such as a projective plane. We now state and prove the product lemma that lets us interpolate between the two examples. 

\begin{lem}\label{lem:product}
    Given $r_1$- and $r_2$-uniform critical hypergraphs $H_1$ and $H_2$, there exists an $r_1r_2$-uniform critical hypergraph with $|V(H_1)|\cdot |V(H_2)|$ vertices and $|E(H_1)|\cdot |E(H_2)|^{r_1}$ edges.     
\end{lem}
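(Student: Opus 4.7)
The plan is simply to take $H := H_1 \rtimes H_2$ as the substitution product defined in \Cref{sec:dproduct} and observe that all the required properties have already been established by the preceding lemmas. Nothing new needs to be constructed: the whole point of setting up the substitution product was to make this lemma essentially immediate.

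First, I would invoke \Cref{lem:properties}. Parts \ref{itm:1}, \ref{itm:2}, and \ref{itm:3} give that $H$ has exactly $|V(H_1)|\cdot|V(H_2)|$ vertices, exactly $|E(H_1)|\cdot|E(H_2)|^{r_1}$ edges, and is $r_1 r_2$-uniform. This handles the three quantitative claims.

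Next, I need to verify that $H$ is critical, which by definition means that $H$ is intersecting and has $\tau(H) = r_1 r_2$. Since $H_1$ and $H_2$ are critical, both are intersecting, so \Cref{lem:intersecting} gives that $H$ is intersecting as well. For the cover number, criticality of $H_i$ says that $\tau(H_i) = r_i$, so \Cref{lem:multiplicative-tau} yields
\[
\tau(H) = \tau(H_1)\,\tau(H_2) = r_1 r_2,
\]
as required. Combining these observations, $H$ is $r_1 r_2$-uniform and critical.

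There is no genuine obstacle here; the entire content of the lemma is packaged into the three preceding lemmas about the substitution product. The only thing to be careful about is bookkeeping: making sure the uniformities $r_1, r_2$ of $H_1, H_2$ are matched correctly when reading off the edge count $|E(H_1)|\cdot|E(H_2)|^{r_1}$ (note the asymmetry, with the exponent being $r_1$ rather than $r_2$, reflecting the non-commutativity of $\rtimes$).
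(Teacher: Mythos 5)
Your proposal is correct and is essentially the same as the paper's own proof: both take $H_1\rtimes H_2$ and immediately cite \Cref{lem:properties} for the quantitative claims and \Cref{lem:intersecting,lem:multiplicative-tau} for criticality. The extra bookkeeping you spell out (matching $\tau(H_i)=r_i$ and noting the exponent is $r_1$) is accurate but not a departure from the paper's argument.
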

\begin{proof}
    Let our new hypergraph be $H_1 \rtimes H_2$. By \Cref{lem:properties}, its number of vertices, edges, and uniformity are as stated. \Cref{lem:intersecting,lem:multiplicative-tau} ensure that it is critical.
\end{proof}

This result suffices to prove our theorem when the uniformity is the product of small primes since the uniformity of the wreath product graph will be the product of smaller uniformities. However, if our desired uniformity is not of this form—for instance, if it is a prime—\Cref{lem:product} cannot yield a hypergraph with the desired uniformity and number of vertices. To address this, we first construct a hypergraph with uniformity close to the target, then adjust using the following augmentation lemma. This lemma permits a small adjustment to the uniformity at the cost of significantly increasing the number of edges. Since we only apply it a few times, the total edge count remains on the same order as that required by the product construction.

\begin{lem}\label{lem:add-one}
    If there exists an $r$-uniform critical hypergraph $H$, then there exists an $(r+1)$-uniform critical hypergraph with $|V(H)|+r+1$ vertices and at most $(r+1) \cdot |E(H)|+1$ edges.
\end{lem}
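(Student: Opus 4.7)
The construction I have in mind is as follows. Take a fresh set $S$ of $r+1$ new vertices, disjoint from $V(H)$. Define the new hypergraph $H'$ on vertex set $V(H) \cup S$ with edges
\[
E(H') \;=\; \bigl\{\, e \cup \{v\} \;:\; e \in E(H),\ v \in S \,\bigr\} \;\cup\; \{S\}.
\]
The vertex count is $|V(H)|+r+1$ and the edge count is $(r+1)|E(H)|+1$, all edges have size $r+1$, so the uniformity bookkeeping is immediate. The remaining work is to verify that $H'$ is intersecting and that $\tau(H')=r+1$.

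For the intersecting property I would do a short case check: two extended edges $e\cup\{v\}$ and $e'\cup\{v'\}$ share $v$ if $v=v'$, and otherwise share a vertex of $e\cap e'$ which is nonempty because $H$ is intersecting; any extended edge $e\cup\{v\}$ meets $S$ in $v$. This is the easy part.

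The heart of the argument is the cover-number computation, specifically the lower bound $\tau(H')\ge r+1$ (the upper bound is trivial since $S$ itself is an edge, hence a cover of size $r+1$). Let $T$ be any cover of $H'$, and split it as $T_1 = T\cap V(H)$ and $T_2 = T\cap S$. The edge $S$ forces $T_2\ne\emptyset$. If $T_2=S$ we are done. Otherwise, pick any $v\in S\setminus T_2$; every edge of the form $e\cup\{v\}$ must then be covered by $T_1$, so $T_1$ is a cover of $H$, giving $|T_1|\ge r$ by criticality of $H$. Combined with $|T_2|\ge 1$, this yields $|T|\ge r+1$ as desired.

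I do not expect a real obstacle here; the main thing to get right is the dichotomy ``either $T_2$ fully contains $S$ or else some $v\in S$ is uncovered and forces $T_1$ to cover $H$,'' which is what lets the bound $\tau(H)=r$ propagate to $\tau(H')=r+1$. Taking $|S|=r+1$ rather than anything smaller is exactly what is needed so that $S$ is a legitimate $(r+1)$-uniform edge, ensuring the intersection with any extended edge and providing the single ``$+1$'' in the edge count.
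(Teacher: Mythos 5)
Your construction and both verification steps (intersecting property and cover-number lower bound) match the paper's proof; the only cosmetic difference is that you argue the lower bound directly via the dichotomy on whether $T\cap S$ exhausts $S$, while the paper argues by contradiction using $|S|>|T|$ to exhibit an uncovered $v\in S\setminus T$. These are the same argument, so the proposal is correct and essentially identical to the paper's.
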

\begin{proof}
    We build our new hypergraph $H'$ by taking $V(H')$ to consist of $V(H)$ together with a set $S$ of $r+1$ new vertices. For every edge $e \in E(H)$ and every vertex $v \in S$, we create an edge $e \cup v$. Finally, we add $S$ as an edge of $H'$.

    This construction yields an \((r+1)\)-uniform hypergraph \(H'\) with $|V(H)|+r+1$ vertices and $|E(H)|\cdot |S| +1=|E(H)|\cdot (r+1) +1$ edges. Additionally, $H'$ is intersecting. To see this, observe that any two edges of the form $e \cup v, f \cup u$ (with $e,f \in E(H)$ and $v,u \in S$) intersect because $e$ and $f$ intersect in $H$. The edge \(S\) intersects every other edge since each edge \(e \cup \{v\}\) includes some vertex \(v \in S\).

    Finally, we show the cover number of \(H'\) is \(r+1\). Since \(H'\) is intersecting, the cover number is at most \(r+1\). Suppose, for contradiction, that there is a vertex cover $T$ of $H'$ with size $r$. Because \(S\) is an edge of \(H'\), \(T\) must contain at least one vertex from \(S\); thus, \(|T \setminus S| \leq r - 1\). However, since \(\tau(H) = r\), there exists \(e \in E(H)\) not covered by \(T \setminus S\). Choose \(v \in S \setminus T\) (which exists because \(|S| = r + 1 > r = |T|).\) Then, the edge \(e \cup v\) in \(H'\) is not covered by \(T\), a contradiction. 
\end{proof}

We are now ready to combine our results and prove the following slightly refined version of \Cref{thm:main}.

\begin{thm}
    There exists $C>0$ such that for any $n \ge 2r-1 \ge 3$ there exists an $n$ vertex $r$-uniform critical hypergraph with at most $\max\{(n/r)^{Cr^2/n},Cr\}$ edges.
\end{thm}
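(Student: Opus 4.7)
The approach combines three constructions depending on the magnitude of $n$ relative to $r$: the complete $r$-uniform hypergraph on $2r-1$ vertices (useful when $n$ is close to $2r-1$), Kahn's hypergraph from \Cref{thm:kahn} (useful when $n=\Omega(r^2)$), and the substitution product of the two for intermediate $n$. \Cref{lem:add-one} is then used to correct the uniformity, since the substitution product has uniformity $r_1 r_2$, which need not equal $r$.

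I would first handle the boundary cases. For $n\ge Br^2$, Kahn's hypergraph padded with isolated vertices is $r$-uniform critical, lives on $|V|\le n$ vertices, and has at most $Br\le Cr$ edges. For $n\le c_0 r$ with a suitable constant $c_0$, the complete $r$-uniform hypergraph on $2r-1$ vertices (padded) has $\binom{2r-1}{r}\le 4^r$ edges; since $(n/r)^{Cr^2/n}$ is $\exp(\Theta(r))$ in this bounded-$n/r$ range, a sufficiently large $C$ ensures $4^r\le (n/r)^{Cr^2/n}$.

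For intermediate $n$, I would apply \Cref{lem:product} to $H_1=$ complete $r_1$-uniform hypergraph on $2r_1-1$ vertices and $H_2=$ Kahn's $r_2$-uniform hypergraph, with $r_1,r_2\ge 2$ chosen so that $r_1 r_2\le r$ and the vertex budget is met, concretely $r_2\approx n/(Br)$ and $r_1\approx r^2/n$. By \Cref{lem:properties}, the product has at most $(2r_1-1)\cdot Br_2^2\le 2Br\cdot r_2\le n$ vertices and at most $(4Br_2)^{r_1}$ edges. Then I apply \Cref{lem:add-one} $k=r-r_1r_2$ times to reach uniformity exactly $r$; each application adds $O(r)$ vertices and multiplies the edge count by at most the current uniformity plus one, so cumulatively the edge count is multiplied by at most $r^k$ up to lower-order terms, while the vertex count grows by $O(kr)$, which can be absorbed.

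The main technical task is to verify $(4Br_2)^{r_1}\cdot r^k\le (n/r)^{Cr^2/n}$ for a constant $C$. The product factor contributes $(O(n/r))^{O(r^2/n)}$, matching the target after absorbing constants into $C$. The add-one factor $r^k$ is the delicate part: I would choose whichever of $r_1,r_2$ is smaller as the ``fixed'' parameter (with the other taken to be $\lfloor r/\text{fixed}\rfloor$) so that the deficit satisfies $k<\min(r_1,r_2)$. A case analysis splitting at $n=r^{3/2}$ then finishes: for $n\le r^{3/2}$ one fixes $r_2$, making $k<r_2\approx n/(Br)$, and the required inequality reduces to $n^2\log r=O(Br^3\log(n/r))$, which holds throughout the range since $\log(n/r)\ge \log 2$; for $n\ge r^{3/2}$ one fixes $r_1$, making $k<r_1\approx r^2/n$, and the required inequality reduces to $\log r=O(\log(n/r))$, which holds since $n/r\ge r^{1/2}$ in this range.
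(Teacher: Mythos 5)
Your proposal follows exactly the paper's strategy: the same two building blocks (the complete $r_1$-uniform hypergraph on $2r_1-1$ vertices and Kahn's critical hypergraph), combined via the substitution product (\Cref{lem:product}), corrected in uniformity by \Cref{lem:add-one}, with a case split near $n\approx r^{3/2}$ that fixes the smaller of $r_1,r_2$ so that the number $k$ of augmentation steps stays small. This is the argument the paper gives.

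There is one genuine gap in the first case, and it sits at exactly the point where the construction's tightness is at stake. You reduce the target inequality to $n^2\log r = O(Br^3\log(n/r))$ over the range $8Br \le n \le O(Br^{3/2})$ and justify it by ``$\log(n/r)\ge\log 2$.'' That justification does not suffice: using only $n^2 \le 4B^2r^3$ and $\log(n/r)\ge\log 2$ yields
\[
\frac{n^2\log r}{Br^3\log(n/r)} \;\le\; \frac{4B^2r^3\log r}{Br^3\log 2} \;=\; \frac{4B}{\log 2}\,\log r,
\]
which is unbounded in $r$. The inequality you need is nonetheless true, but proving it requires pairing the two bounds: when $n^2/r^3$ is close to its maximum (so $n/r$ is of order $\sqrt r$), the factor $\log(n/r)$ is of order $\log r$ and compensates. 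Concretely, the paper uses the monotonicity of $x\mapsto x/\log x$ to get $\frac{n^2/r^2}{\log(n^2/r^2)} \le \frac{4B^2r}{\log(4B^2r)} \le \frac{4B^2r}{\log r}$ from $n^2/r^2 \le 4B^2 r$, which is precisely what your reduction amounts to. Replace your one-line justification with that argument (or an equivalent), and the proof is complete. The second case ($n\ge r^{3/2}$, where $\log(n/r)\ge \tfrac12\log r$) is handled correctly.
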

\begin{proof}
    Let $B$ be the maximum of the constant provided by \Cref{thm:kahn} and the number $4$, and choose $C = 32B$.
    We may assume that $n \le Br^2$; otherwise, the hypergraph of \Cref{thm:kahn} has at most $B r$ edges and at most $Br^2\le n$ vertices, so by padding with isolated vertices if necessary, we obtain the desired hypergraph. Similarly, we may assume $n \ge 8Br$; if not, the hypergraph $K_{2r-1}^{(r)}$ augmented with isolated vertices $n-2r+1$ produces a hypergraph with at most $4^r\le (n/r)^{4r} \le (n/r)^{32Br^2/n}$ edges. Thus, from now on, we will assume that $8Br \le n \le Br^2$.

    Let $H_1$ be the complete $r_1$-uniform hypergraph on $2r_1-1$ vertices padded with an extra isolated vertex. Note that $H_1$ is critical, has $2r_1$ vertices and has less than $4^{r_1}$ edges.
    Let $H_2$ be an $r_2$-uniform critical hypergraph with at most $Br_2$ edges, provided by \Cref{thm:kahn}. Note that such a hypergraph has at most $Br_2^2$ (non-isolated) vertices, and we may remove the isolated ones.

    We first construct $H=H_1 \rtimes H_2$, which is $r_1 r_2$-uniform, has at most 
    $2Br_1r_2^2$ vertices, and has at most 
    $4^{r_1} \cdot (Br_2)^{r_1}=(4Br_2)^{r_1}$ edges.

    In order to keep our augmentation step within the correct order of magnitude, we split our analysis into two cases. In the first case, if $n<2Br^{3/2}$, we set 
    $$r_2 := \floor{\frac{n}{4Br}}\ge 2, \quad\quad r_1 := \floor{\frac{r}{r_2}} \ge4, \quad\quad t:= r-r_1r_2= r \bmod{r_2}.$$ 
    Note that $0\le t <r_2$.
    
    We now apply \Cref{lem:add-one} a total of $t$ times to increase the uniformity to precisely $r$. By our choice of $r_1,r_2$, the final number of vertices is at most 
    $$2Br_1r_2^2 + t \cdot r \le 2B r r_2+r_2 \cdot r \le 3Brr_2 < n,$$
    which we can pad with isolated vertices if necessary to reach exactly $n$. The number of edges increases to at most $$r^{r_2} \cdot (4Br_2)^{r_1}\le r^{n/(4Br)} \cdot (n/r)^{r/r_2}< (n/r)^{2Br^2/n} \cdot (n/r)^{8Br^2/n}\le (n/r)^{10Br^2/n},$$
    where the first inequality follows from the definitions of $r_1,r_2$ and the second from the assumption $3 \leq n^2/r^2 \le 4B^2 r$ (which implies $\frac{n^2/r^2}{\log (n^2/r^2)}\le \frac{4B^2 r}{\log (4B^2 r)}\le \frac{4B^2 r}{\log r}$ and hence $r^{n/(4Br)} \le (n^2/r^2)^{Br^2/n}$) along with the fact that $r_2 \ge n/(8Br)$.

    If $n \ge 2B r^{3/2}$, we set 
    $$r_1 := \floor{\frac{8Br^2}{n}}\ge 8, \quad\quad r_2 := \floor{\frac{r}{r_1}}\ge 1, \quad\quad t:= r-r_1r_2= r \bmod{r_1}.$$ 
    Note that $0\le t <r_1$.
    
    We now apply \Cref{lem:add-one} a total of $t$ times to increase the uniformity to precisely $r$. The final vertex count is at most
    $$2Br_1r_2^2 + t \cdot r\le 2B r r_2+r_1\cdot r\le \frac{2Br^2}{r_1}+\frac{8Br^3}{n}\le \frac{2Br^2}{4Br^2/n}+\frac{2n}{B} = \frac{n}{2}+\frac{2n}{B} \le n,$$
    where we used $r_1 \ge 4Br^2/n$ in the third inequality and $B \ge 4$ in the final inequality. The number of edges increases to at most $$r^{r_1} \cdot (4Br_2)^{r_1}= (4Brr_2)^{r_1}\le (4Br^2)^{8Br^2/n}\le (n/r)^{32Br^2/n},$$
    using our case assumption (which implies $n/r \ge 2B \sqrt{r}$) in the final inequality.
\end{proof}

\section{Concluding remarks}
In this paper, we answer a vertex-constrained version of the original Erd\H{o}s-Lov\'asz problem—finding the minimum number of edges in an \(r\)-uniform, intersecting hypergraph on \(n\) vertices with cover number \(r\)—posed by Alon, Buci\'c, Christoph, and Krivelevich. We resolve the problem up to a \(\log(n/r)\) factor in the exponent by showing that \(f(n, r) \le  (n/r)^{O(r^2/n)},\) for $n \le O(r^2)$. 

It is natural to wonder whether the simple lower bound \eqref{eq:de-caen} telling us $f(n,r) \geq {\binom{n}{r-1}} \big/{\binom{n-r}{r-1}}$ might be tight. This turns out not to be the case.

\begin{prop}\label{prop:lower-bound}
Let $2r-1\le n < r^2$. Let $H$ be an $n$-vertex, $r$-uniform hypergraph with cover number at least $r$. Then,
\[|E(H)| > \frac{n}{5r} \cdot \frac{\binom{n}{r-1}}{\binom{n-r}{r-1}} \ge \frac{n}{r} \cdot e^{\Omega(r^2/n)}\]
\end{prop}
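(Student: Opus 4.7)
Proof proposal. The plan is to strengthen the de Caen double count by applying it at multiple levels of vertex subsets and then optimizing over the level.

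For each integer $t$ with $0 \le t \le r-1$ and each $t$-subset $T \subseteq V(H)$, consider the subhypergraph $H_T := \{e \in E(H) : e \cap T = \emptyset\}$, viewed on the ground set $V(H)\setminus T$. I claim $\tau(H_T) \ge r - t$: indeed, any cover $U \subseteq V(H)\setminus T$ of $H_T$ with $|U| < r - t$ would make $T \cup U$ a cover of $H$ of size less than $r$, contradicting $\tau(H) \ge r$. Since any hypergraph with cover number at least $s$ has at least $s$ edges (otherwise one vertex per edge yields a smaller cover), we deduce $|E(H_T)| \ge r - t$. Double counting pairs $(T, e)$ with $e \cap T = \emptyset$ (each edge is disjoint from exactly $\binom{n-r}{t}$ $t$-subsets) now gives, for every $0 \le t \le r-1$,
\[
|E(H)|\binom{n-r}{t} \;=\; \sum_{|T|=t}|E(H_T)| \;\ge\; (r-t)\binom{n}{t},
\]
so $|E(H)| \ge (r-t)\binom{n}{t}/\binom{n-r}{t}$. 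Writing $K := r - t$ and simplifying, this rearranges to
\[
|E(H)| \;\ge\; K \cdot D \cdot \prod_{l=2}^K \frac{n-2r+l}{n-r+l}, \qquad D := \frac{\binom{n}{r-1}}{\binom{n-r}{r-1}}.
\]

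It remains to choose $K$ so that the right-hand side exceeds $(n/(5r)) D$. When $n < 5r$, even $K = 1$ (plain de Caen) suffices, since $(n/(5r))D < D$. Otherwise take $K := \lfloor (n-r+2)/(2r) \rfloor + 1$, which satisfies $2 \le K \le r-1$ whenever $n \ge 5r$ and $n < r^2$. Each factor $1 - r/(n-r+l) \ge 1 - r/(n-r+2)$ for $l \ge 2$, and since $(K-1)\cdot r/(n-r+2) \le 1/2$, Bernoulli's inequality yields $\prod_{l=2}^K (1 - r/(n-r+l)) \ge 1/2$. Hence
\[
|E(H)| \;\ge\; \frac{K}{2}\cdot D \;\ge\; \frac{n-r+2}{4r} \cdot D \;>\; \frac{n}{5r} \cdot D,
\]
where the last inequality holds since $5(n-r+2) > 4n$ whenever $n \ge 5r$.

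The second inequality $\frac{n}{5r} D \ge \frac{n}{r} e^{\Omega(r^2/n)}$ is an easy consequence of $D \ge (1 + r/(n-r))^{r-1} \ge e^{\Omega(r^2/n)}$, the latter via $\log(1+x)\ge x/(1+x)$.

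The main delicate point is the choice of the level $K$ and the estimation of the product. Bernoulli's inequality is just strong enough to yield the stated constant $5$; a cruder exponential estimate such as $\prod \ge e^{-1}$ would only give $n/(er)$, losing a factor of $e$. The hypothesis $n < r^2$ enters solely to ensure $K \le r$, so that the $t$-subsets involved are nonempty.
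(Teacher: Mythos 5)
Your proof is correct and follows essentially the same route as the paper: double count over sets $T$ of size $r-t$ using the observation that at least $t = r - |T|$ edges must avoid any such set (the paper phrases this via a direct covering argument; you phrase it via $\tau(H_T) \geq r-t$, which is the same thing), then express $\binom{n}{r-t}/\binom{n-r}{r-t}$ as $D$ times a product and estimate. The only differences are cosmetic: the paper fixes $t = \lfloor n/r\rfloor$ and bounds the product by $e^{-7/6}$ via $1-x \geq e^{-7x/6}$, while you take $K = r - t \approx n/(2r)$ (roughly half the paper's value) and bound the product by $1/2$ via Bernoulli, landing on the same constant $5$.
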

\begin{proof}
The stated inequality holds by \eqref{eq:de-caen} (which does not make use of the intersecting property) if $n < 5r,$ so we may assume $ n \ge 5r$. Combined with the assumed $r^2$ upper bound we may also assume $r>5$. 

Let $t = \floor{\frac{n}{r}} \le r.$
Pick any set of $r-t$ vertices of $H$. There must exist at least $t$ edges outside it: if there were at most $t-1$ edges outside this set, we can cover them with $t-1$ vertices and obtain a cover of the whole hypergraph of size $r-t+t-1=r-1$.

There are $\binom{n-r}{r-t}$ sets of size $r-t$ living outside any edge, so each edge is counted at most $\binom{n-r}{r-t}$ times. Therefore, it follows that
\begin{align*}
|E(H)| \geq t \cdot \frac{\binom{n}{r-t}}{\binom{n-r}{r-t}} &= t \cdot \frac{\binom{n}{r-1}}{\binom{n-r}{r-1}} \cdot \frac{(n-2r+t)\cdots (n-2r+2)}{(n-r+t)\cdots (n-r+2)}\\ &\geq t \cdot \frac{\binom{n}{r-1}}{\binom{n-r}{r-1}} \cdot \left(1- \frac{r}{n-r+2}\right)^{t-1}\\ &\ge t \cdot \frac{\binom{n}{r-1}}{\binom{n-r}{r-1}} \cdot e^{\frac{-7(t-1)r}{6(n-r+2)}}\ge \frac{t}{e^{7/6}} \cdot \frac{\binom{n}{r-1}}{\binom{n-r}{r-1}}> \frac{n}{5r} \cdot \frac{\binom{n}{r-1}}{\binom{n-r}{r-1}},
\end{align*}
where in the third inequality we used that $1-x \ge e^{-7x/6}$ with $x= \frac{r}{n-r+2} \le \frac{1}{4}$, in the fourth inequality that $t-1 \le \frac{n-r+2}{r},$ and in the final one that $t \ge \frac{5n}{6r}$.
\end{proof}

We made no particular effort to optimize the constant in the above bound. A more careful analysis of the argument shows that for $n \ge 3r$, we get a (slightly) stronger bound compared to \eqref{eq:de-caen}. Moreover, when $n \ge r^2$, the minimum number of edges is clearly $r$ (e.g. a hypergraph consisting of $r$ vertex disjoint edges). Finally, if $H$ is known to be intersecting, the above bound can be improved by roughly a factor of two since in an intersecting hypergraph one can cover any set of up to $2t-2$ edges with $t-1$ vertices. It would be interesting to obtain a more substantial improvement from the intersecting assumption.  

As mentioned in the introduction, the version of the problem without the intersecting assumption leads us to Turán numbers. The Turán number \(T(n, k, r)\) is defined as the maximum number of edges in an \(r\)-uniform hypergraph $H$ on \(n\) vertices that does not contain $K_{k}^{(r)}$. Equivalently, every \(k\)-subset of vertices of $H$ omits some $r$-subset as an edge. This condition is equivalent to the complementary hypergraph $\bar{H}:=\{S \in \binom{[n]}{r}\mid S \notin H\}$ having the $(k,r)$-covering property—that is, every set of $k$ vertices should contain an edge of $\bar{H}$. Let $U(n,k,r)$ denote the minimum number of edges in an $r$-uniform hypergraph on $n$ vertices with the $(k,r)$-covering property. Then, $U(n,k,r)=\binom{n}{r} - T(n, n-r+1, r).$ Since having $(k,r)$-covering property is equivalent to having cover number at least $n-k+1$, determining our function $f(n,r)$ is equivalent to determining the minimum number of edges in an intersecting $r$-uniform hypergraph satisfying the $(n-r+1,r)$-covering property. This implies 
$$f(n,r) \ge U(n,n-r+1,r)=\binom{n}{r} - T(n, n-r+1, r).$$
We note that \Cref{prop:lower-bound} gives a lower bound on $U(n,n-r+1,r)$.

Both the Tur\'an and the covering questions have been extensively studied over the years\footnote{See for example the survey \cite{sidorenko-what-we-know-turan} where all the different perspectives we discuss here are explored.}, although most of the attention was afforded to instances with $r$ and $k$ being fixed. The instance relevant for us is when $k=n-r+1$ and $2r-1 \le n\le O(r^2)$. In this regime, both \(r\) and \(k\) are growing with \(n\) (with $k$ very close to \(n\)). In particular, we are interested in Tur\'an problem for almost spanning hypergraphs. As mentioned above, it is easy to see that $U(n,n-r+1,r)=r$ for $n \geq r^2$.
For our regime, just below this threshold, Sidorenko \cite{sidorenko-thesis} shows that \[T(n, n-r+1, r) = \begin{cases}
    3r - \left\lfloor \frac{2n}{r} \right\rfloor \quad \text{if } r \text{ even} \quad \frac{3r^2}{4} \leq n \leq r^2 \\ 
    3r - \left\lfloor \frac{2(n-r)}{r-1} \right\rfloor \quad \text{if } r \text{ odd} \quad \frac{3r^2 + r}{4} \leq n \leq r^2. 
\end{cases}\]

The best upper bound we are aware of comes from repeatedly taking an edge belonging to the most $n-r+1$ vertex sets not containing any edge already. Since the probability that a random edge belongs to an $n-r+1$ vertex set is $p=\binom{n-r+1}{r}/\binom{n}{r}$, we can always pick out a new edge which belongs to at least this proportion of the $n-r+1$ sets not yet containing one. Hence, after $i$ iterations we are left with at most $(1-p)^i \binom{n}{n-r+1}\le e^{-ip}\binom{n}{r-1}$ sets of $n-r+1$ vertices not containing an edge. This implies we can hit them all with $ p^{-1} \log \binom{n}{r-1}$ edges. Putting together this upper bound with the lower bound from \Cref{prop:lower-bound}, we get
$$ \frac{n}{5r} \le \frac{U(n,n-r+1,r)}{\binom{n}{r-1}/\binom{n-r}{r-1}} < 2r \log \frac{n}r.$$
It would be interesting to close this gap. Returning to our question of separating $f(n,r)$ and $U(n,n-r+1,r)$, with this upper bound in mind, it would suffice to improve \Cref{prop:lower-bound} by making use of the intersecting property to improve the gain factor by only $2r \log \frac nr$.

\textbf{Acknowledgements.}
We want to thank Matthew Kwan and Stefan Glock for useful discussions, and Noga Alon for suggesting a way to prove \Cref{prop:lower-bound}.

\providecommand{\MR}[1]{}
\providecommand{\MRhref}[2]{%
  \href{http://www.ams.org/mathscinet-getitem?mr=#1}{#2}
}

   \bibliographystyle{amsplain_initials_nobysame}
   \bibliography{ref}

\end{document}